\documentclass{amsart}
\usepackage{amssymb}
\usepackage{mathrsfs}
\usepackage{graphicx}
\usepackage{comment}
\usepackage{enumitem}

\newcommand{\ds}{\displaystyle}
\newcommand{\be}{\begin{equation}}
\newcommand{\ee}{\end{equation}}
\newcommand{\ba}{\begin{align}}
\newcommand{\ea}{\end{align}}
\newcommand{\nni}{\noindent}

\newtheorem{theorem}{Theorem}[section]

\newtheorem{lemma}{Lemma}[section]

{\begin{list}{}{%

\settowidth{\labelwidth}{\textsf{{\it #1.}}}
\setlength{\labelsep}{4mm}%
\setlength{\leftmargin}{\labelwidth}
\addtolength{\leftmargin}{\labelsep}
}}
{\end{list}}

\def\beq{\begin{equation}}\def\enq{\end{equation}}

\keywords{inhomogeneous Diophantine approximation}
\subjclass[2010]{Primary: 11J20; Secondary: 11J06, 11J70}
%11B83 11R06, 11T22, 43A40}
\date{\today}

\title{An upper bound on  the Inhomogeneous Approximation Constants}
%\title{Inhomogeneous Diophantine Approximation}

\author[B. Paudel]{Bishnu Paudel}
\address{ Department of Mathematics\\
         Kansas State University\\
         Manhattan, KS 66506, USA}
\email{bpaudel@ksu.edu, pinner@math.ksu.edu}

\author[C. Pinner]{Chris Pinner}

%\thanks{This investigation began as a K-State I-Center undergrad research project.} 

%\keywords{group determinant, circulant determinant, Mahler measure}
%\subjclass[2010]{Primary: 11R06, 15B36; Secondary: 11B83, 11C08,  11C20, 11G50, 11R09, 11T22, 43A40}
%\date{\today}
\begin{document}
\begin{abstract}
For an irrational real $\alpha$  and $\gamma\not \in \mathbb Z + \mathbb Z\alpha$  it is well known that
$$ \liminf_{|n|\rightarrow \infty}  |n| ||n\alpha -\gamma || \leq \frac{1}{4}. $$
If the partial quotients, $a_i,$ in the negative `round-up' continued fraction expansion of $\alpha$
have $R:=\liminf_{i\rightarrow \infty}a_i$ odd, then the 1/4  can be replaced by
$$ \frac{1}{4}\left(1-\frac{1}{R}\right)\left(1-\frac{1}{R^2}\right), $$
which is optimal. The optimal bound for even $R\geq 4$ was already known.

\end{abstract}

\maketitle

\section{Introduction}
For a real irrational $\alpha$ and real $\gamma$ in $[0,1)$ we define the inhomogeneous approximation constant
$$ M(\alpha,\gamma):=\liminf_{|n|\rightarrow \infty}  |n| ||n\alpha -\gamma ||,  $$
where $||x||$ is the distance from $x$ to the nearest integer, and the case of worst inhomogeneous approximation
$$ \rho(\alpha) := \sup_{\gamma\not \in \mathbb Z + \mathbb Z\alpha} M(\alpha,\gamma). $$
From a well known Theorem of Minkowski, see for example  \cite[Chap. III]{Cassels} or \cite[IV.9]{Rockett},  we have
\be \label{quarter} M(\alpha,\gamma)\leq \frac{1}{4}, \ee
Grace \cite{grace} giving examples with $\rho(\alpha)=\frac{1}{4}$. These examples though, have continued fraction expansions with the 
partial quotients all going to infinity. We are interested in improving this bound when the partial quotients have a bounded
subsequence.
Instead of the usual continual fraction expansion, here we shall use the {\it negative expansion}:
\begin{equation}\label{ContExpan}
    \alpha=\frac{1}{a_1-\cfrac{1}{a_2-\cfrac{1}{a_3-\cdots}}}=:[0; a_1, a_2, a_3,\cdots]^{-},
\end{equation}
where the integers $a_i\geq2$ are generated by always rounding up instead of down
\begin{equation*}
    \alpha_0:=\{\alpha\}=\alpha, \,\ \,\ a_{n+1}:=\left\lceil{\frac{1}{\alpha_n}}\right\rceil, \,\ \,\ \alpha_{n+1}:=\left\lceil{\frac{1}{\alpha_n}}\right\rceil-\frac{1}{\alpha_n}.
\end{equation*}
An algorithm for evaluating $M(\alpha,\gamma)$ from \eqref{ContExpan} and an $\alpha$-expansion of $\gamma$
\be \label{alphaexpansion}
    \gamma=\sum_{i=1}^{\infty}\frac{1}{2}(a_i-2+t_i)D_{i-1},   \;\;\quad D_{i-1}:=\alpha_0\alpha_1\cdots \alpha_{i-1}.
\ee
was given in \cite{Pinner}; we review this in Section \ref{alg}. See \cite{Takao} for alternative approaches.
Setting
\be \label{defR}  R:= \liminf_{i\rightarrow \infty} a_i, \ee
it was also shown  in \cite{Pinner} that if $R\geq 3,$ then 
\be \label{oldbound} \rho (\alpha ) \leq \frac{1}{4}\left(1-\frac{1}{R}\right). \ee
The restriction $R\geq 3$ is needed here; the examples of Grace will have long strings of 2's in their negative expansion.
For $R\geq 4$ even, \eqref{oldbound} is best possible; for example,  if $\alpha=[0;\overline{R,2N}]^{-},$ then taking all the $t_i=0$ in \eqref{alphaexpansion} gives $\rho(\alpha)$ with
$$ \lim_{N\rightarrow \infty}\rho(\alpha) = \lim_{N\rightarrow \infty} M\left(\alpha,\frac{1}{2}(1-\alpha)\right)=\frac{1}{4}\left(1-\frac{1}{R}\right). $$
When $R$ is odd though, the bound \eqref{oldbound} can be improved.

\begin{theorem}\label{Odds}
If $R$  is odd then 
\be \label{oddbound}  \rho(\alpha) \leq C(R):=\frac{1}{4}\left(1-\frac{1}{R}\right)\left(1-\frac{1}{R^2}\right)=\frac{1}{4}\left(1-\frac{1}{R}-\frac{1}{R^2}+\frac{1}{R^3}\right). \ee
\end{theorem}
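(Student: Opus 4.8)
The plan is to run the evaluation algorithm of \cite{Pinner} recalled in Section~\ref{alg} and to sharpen, precisely at the right indices, the estimate that already yields \eqref{oldbound}. The one new ingredient is a parity observation. Since each coefficient $\tfrac12(a_i-2+t_i)$ of the $\alpha$-expansion \eqref{alphaexpansion} is an integer, the digit $t_i$ must satisfy $t_i\equiv a_i\pmod 2$. Hence, when $a_i$ is odd we cannot take $t_i=0$: the admissible value closest to the balanced choice $t_i=0$ (which for even $a_i$ produces $\gamma=\tfrac12(1-\alpha)$ and the extremal value $\tfrac14(1-\tfrac1R)$) is $t_i=\pm1$. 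This forced displacement is exactly what costs the extra factor $1-\tfrac1{R^2}$.

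First I would reduce to good indices. Because $R=\liminf_i a_i$ and the $a_i$ are integers, there are infinitely many $i$ with $a_i=R$; since $M(\alpha,\gamma)$ is a $\liminf$ over $i$, it suffices to bound the local contribution of the algorithm at these indices from above by $C(R)$. At such an index the algorithm expresses the relevant product $|n|\,\|n\alpha-\gamma\|$ (for the best $n$ at scale $i$) as an explicit function $\Phi$ of the forward tail $\alpha_i=[0;a_{i+1},a_{i+2},\dots]^{-}$, the backward ratio $q_{i-1}/q_i=[0;a_i,a_{i-1},\dots,a_1]^{-}$, and the digit $t_i$, together with the neighbouring digits the adversary is free to choose. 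The content of \eqref{oldbound} is that the supremum of $\Phi$ over all admissible data with $a_i=R$ equals $\tfrac14(1-\tfrac1R)$, the supremum being approached as the neighbouring partial quotients tend to infinity and $t_i\to0$.

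The heart of the argument is a constrained re-optimization of $\Phi$ at a good index under the parity restriction $|t_i|\ge1$. Heuristically, $M(\alpha,\gamma)$ is a \emph{quadratic} maximum in the offset of $\gamma$ from the balanced point $\tfrac12(1-\alpha)$; at an index with $a_i=R$ the admissible offsets form a lattice of relative spacing $\asymp 1/R$ (there are $\asymp R$ digit choices spanning the relevant interval), so when $R$ is odd the balanced point is missed by a relative amount $\asymp1/R$, and the quadratic flatness converts this into a multiplicative loss of order $1/R^2$, i.e.\ the factor $1-\tfrac1{R^2}$. Rigorously, I expect $\Phi$ to be strictly decreasing in $|t_i|$ once the remaining data are fixed at their $\Phi$-maximizing values, so the worst case is $|t_i|=1$; substituting $t_i=\pm1$ and maximizing over the tails (all entries $\ge2$, the $i$th entry equal to $R$) should return exactly $C(R)$. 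Since every index with $a_i=R$ then yields a local value $\le C(R)$, and there are infinitely many of them, $M(\alpha,\gamma)\le C(R)$ for every admissible $\gamma$, giving $\rho(\alpha)\le C(R)$.

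The main obstacle is sharpness: the gain over \eqref{oldbound} is only the factor $1-\tfrac1{R^2}$, so the optimization must be performed exactly rather than asymptotically. Concretely, I must (i) verify the monotonicity of $\Phi$ in $|t_i|$ on the feasible range, ruling out that the adversary recovers the loss by re-tuning the neighbouring digits $t_{i\pm1}$ or by inflating some $a_{i\pm1}$ beyond $R$; (ii) handle both signs $t_i=\pm1$ and the forward/backward asymmetry of $\Phi$; and (iii) identify the extremal tail configuration that makes the supremum equal to the clean value $\tfrac14(1-\tfrac1R)(1-\tfrac1{R^2})$ rather than a nearby algebraic number. Establishing that this single-index parity bottleneck genuinely controls the $\liminf$, with no compensation available across indices, is the crux; the remaining estimates should be routine manipulations of the negative-continued-fraction recurrences $D_i=\alpha_iD_{i-1}$ and $q_{i+1}D_i-q_iD_{i+1}=1$.
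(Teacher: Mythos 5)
You have correctly isolated the mechanism (the parity constraint $t_i\equiv a_i \bmod 2$ forces $|t_i|\ge 1$ at indices with $a_i=R$ odd, and a quadratic maximum converts a displacement of order $1/R$ into a multiplicative loss of order $1/R^2$), and your reduction to indices with $a_k=R$ is sound since $M(\alpha,\gamma)$ is a liminf. But the proposal stops exactly where the difficulty begins, and the heuristic on which it rests is false as stated. The offset at index $k$ is $d_k^-=(t_k+d_{k-1}^-)\bar\alpha_k$, and the accumulated backward sum $d_{k-1}^-$ is only constrained to lie in $(-(1-\bar\alpha_{k-1}),\,1-\bar\alpha_{k-1})$, an interval of length nearly $2$. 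So with $t_k=-1$ the forced contribution $-\bar\alpha_k\approx -1/R$ can be almost entirely cancelled by $\bar\alpha_k d_{k-1}^-$ (e.g.\ when $d_{k-1}^-$ is close to $1$, which happens when the earlier digits are large and positive), leaving $|d_k^-|$ far smaller than $1/R$. Your claim that ``the balanced point is missed by a relative amount $\asymp 1/R$'' therefore does not hold index by index, and the ``single-index parity bottleneck'' you name as the crux genuinely fails without further input. You flag this as an obstacle but offer no way around it.

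The paper's proof shows what is actually needed: after disposing of $t_k=a_k$ infinitely often via \eqref{ends} and of $|t_k|\ge 3$ via the product $s_3(k)s_4(k)$, it takes $t_k=\pm1$ and splits into four cases according to whether $d_{k-1}^-$ and $d_k^+$ are above or below $1/R$. When $d_{k-1}^-$ is small the displacement $|d_k^-|\ge(1-\tfrac1R)\bar\alpha_k$ really is of order $1/R$ and one works at index $k$ (using $s_1(k)$, or the geometric mean $\sqrt{s_1(k)s_3(k)}$ when additionally $d_k^+\ge 1/R$); when $d_{k-1}^-$ is large the cancellation you overlooked occurs, and the argument must either exploit the forward tail $d_k^+$ directly ($s_3(k)$ in Case 2) or shift the whole analysis back to index $k-1$, where $d_{k-1}^-\ge 1/R$ serves as the displacement (Case 4). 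This coupling of adjacent indices and of the forward and backward tails is the missing idea; without it your optimization ``over the tails with $t_i=\pm1$'' does not return $C(R)$, because the adversary can indeed re-tune the neighbouring digits to recover the loss at any single index.
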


This odd $R$ bound \eqref{oddbound}   is  also best possible. 

\begin{theorem} \label{Sharps}  If $\alpha =[0;\overline{R,NR}]^-$ and the $t_i$ in the expansion of $\gamma_*$ have corresponding period $-1,N$, then 
$$\lim_{N\rightarrow \infty} M(\alpha,\gamma_*)=C(R).$$

\end{theorem}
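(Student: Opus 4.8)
The plan is to prove the lower bound and let Theorem~\ref{Odds} supply the upper bound for free. Since the partial quotients of $\alpha=[0;\overline{R,NR}]^-$ take only the values $R$ and $NR\ge R$, we have $\liminf_i a_i=R$, which is odd; hence Theorem~\ref{Odds} gives $M(\alpha,\gamma_*)\le\rho(\alpha)\le C(R)$ for every $N$, once we know $\gamma_*\notin\mathbb Z+\mathbb Z\alpha$. The latter holds because the digit string $(t_i)$ of period $-1,N$ has infinitely many nonzero terms. Consequently $\limsup_{N\to\infty}M(\alpha,\gamma_*)\le C(R)$, and it remains only to establish the matching lower bound $\liminf_{N\to\infty}M(\alpha,\gamma_*)\ge C(R)$.

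First I would record the explicit periodic data. Because $\alpha$ is purely periodic of period two, its complete quotients take two values: $\alpha_0=\alpha_2=\cdots=x$, where $x=[0;\overline{R,NR}]^-$ solves $x^2-NRx+N=0$, and $\alpha_1=\alpha_3=\cdots=x/N$; the backward quotients $q_{i-1}/q_i=[0;a_i,\dots,a_1]^-$ are likewise asymptotically periodic and, by the reversal symmetry of the period, converge to $x/N$ (even $i$) and $x$ (odd $i$). From $D_{i-1}=1/(q_i-\alpha_i q_{i-1})$ and $p_iq_{i-1}-p_{i-1}q_i=1$ one then obtains closed forms for the products $q_{i-1}D_{i-1}$ in each residue class, all rational in $x$ with $1-\alpha_0\alpha_1=2-Rx$. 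I would also verify here that $(-1,N)$ is an \emph{admissible} digit string: the coefficients $\tfrac12(a_i-2+t_i)$ equal $\tfrac12(R-3)$ at odd $i$ and $\tfrac12\big(N(R+1)-2\big)$ at even $i$, and these are nonnegative integers \emph{precisely because $R$ is odd}. This parity constraint is the structural reason the sharp example is built from an odd $R$ together with the $\alpha$-period $R,NR$.

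Next I would feed this periodic data into the evaluation algorithm of Section~\ref{alg}. That algorithm expresses $M(\alpha,\gamma)$ as a $\liminf$ over $i$ of a local quantity built from the digit sequence near $i$ and the tails $\alpha_i$ and $q_{i-1}/q_i$; since both $(a_i)$ and $(t_i)$ have period two and the tails converge, this sequence is asymptotically two-periodic, so the $\liminf$ collapses to $M(\alpha,\gamma_*)=\min(M_{\mathrm{even}},M_{\mathrm{odd}})$, two explicit algebraic functions of $R$ and $N$ through $x$. I would compute $\gamma_*$ itself from \eqref{alphaexpansion} as a consistency check: the telescoping identity $\sum_i\tfrac12(a_i-2)D_{i-1}=\tfrac12(1-\alpha)$, together with $\sum_i t_iD_{i-1}$ evaluated over the period, yields a closed form with $\gamma_*\to\tfrac12(1-1/R)^2$ as $N\to\infty$.

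Finally I would pass to the limit $N\to\infty$, where $x\to1/R$ and $\alpha_1=x/N\to0$, and show that each of $M_{\mathrm{even}}$ and $M_{\mathrm{odd}}$ tends to a value $\ge C(R)$; together with the free upper bound this forces $\min(M_{\mathrm{even}},M_{\mathrm{odd}})\to C(R)$, proving the theorem. I expect the main obstacle to be the correct instantiation of the algorithm rather than the asymptotics: one must track which sign of $n$ (equivalently, which side of $\gamma_*$) realizes the local optimum in each residue class and carry the admissibility bookkeeping, since a misidentified branch would change the constant. Once the two branches are pinned down, the $N\to\infty$ evaluation is a routine rational-function limit.
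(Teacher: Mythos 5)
Your proposal follows essentially the same route as the paper: both reduce to Lemma \ref{mainlemma}, use the two-periodicity of $(a_i,t_i)$ (and the admissibility/parity check, which you correctly tie to $R$ being odd) to collapse the $\liminf$ to the two residue classes, and then evaluate the $N\rightarrow\infty$ limits using $\bar\alpha_k\rightarrow 1/R$, $\alpha_k\rightarrow 0$ at the $a_k=R$ positions. The only real difference is cosmetic — you import the upper bound from Theorem \ref{Odds} so that you only need each branch to have limit $\geq C(R)$, whereas the paper simply computes all four limits directly ($s_1(k),s_3(k)\rightarrow C(R)$ and $s_2(k),s_4(k)$ tending to larger values, via $d_k^-\rightarrow -\tfrac1R+\tfrac1{R^2}$, $d_k^+\rightarrow\tfrac1R$) — and the ``routine rational-function limit'' you defer is in fact the entire content of the paper's proof, for which your periodic data ($x\rightarrow 1/R$, $\alpha_1=x/N\rightarrow 0$) is exactly what is needed.
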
 

One can also obtain lower bounds on $\rho(\alpha)$ in terms of $R$. When $R\geq 4$ is even, we showed in \cite{BishnuInhomog} the optimal
bound
$$ \rho(\alpha) \geq \frac{R-2}{4(\sqrt{R^2-4}+1)}=\frac{1}{4}\left(1-\frac{3}{R} +\frac{5}{R^2}+O(R^{-3})\right), $$
and when $R\geq 3$ is odd, a  bound which is at least asymptotically optimal
$$ \rho(\alpha) \geq \frac{2R-2-\sqrt{(R+1)^2-4}}{4(\sqrt{(R+1)^2-4}-1)}=\frac{1}{4}\left(1-\frac{3}{R} +\frac{4}{R^2}+O(R^{-3})\right). $$

Of course in the classical homogeneous case, $M(\alpha,0)$ depends on the largest rather than the smallest partial quotients. Using the negative expansion, if $R\geq 3$,
$$\frac{1}{\sqrt{r^2-4}}\leq M(\alpha,0)\leq  \frac{1}{r-R+ \sqrt{R^2-4}},\;\;\quad r:=\limsup_{i\rightarrow \infty} a_i, $$
with equality for $\alpha=[0;\overline{r}]^-$ and $[0;\overline{r,(R,)^l}]^-$, $l\rightarrow \infty$,  respectively.
Notice $M(\alpha,0)$ only exceeds the inhomogeneous bound,  \eqref{oldbound} if $R$ is even and \eqref{oddbound} if $R$ is odd,  for a few small $r$;
namely $3\leq r\leq 7$ when $R=3$,  $r=4$ or 5 when $R=4$, and $R=5=r$.

\section{The sequence of best inhomogeneous approximations}\label{alg}

In \cite{Pinner} it was shown how to use the continued fraction expansion \eqref{ContExpan} of $\alpha$ to obtain a
unique $\alpha$-expansion of a $\gamma$ in $(0,1)$ of  the form \eqref{alphaexpansion}, where the $t_i$ are integers with 
%the same parity as the $a_i$ and
$$ -(a_i-2) \leq t_i \leq a_i, \quad t_i\equiv a_i \mod 2, $$
with no blocks of the form $t_i=a_i$ with $t_j=a_j-2$ all $j>i$ or $t_{i+\ell}=a_{i+\ell}$ with $t_j=a_j-2$ for any $i<j<i+\ell$.

We set
\be \label{defalphas} \alpha_i=[0;a_{i+1},a_{i+2},\ldots ]^-,\quad \quad \bar\alpha_i:=[0;a_i,a_{i-1},\ldots ,a_1]^-. \ee
If $t_k=a_k$ infinitely often, then \cite[Lemma 1]{Pinner},
\be \label{ends}  M(\alpha,\gamma)\leq \liminf_{\stackrel{k\rightarrow \infty}{t_k=a_k}} \frac{\bar\alpha_k}{4(1-\bar\alpha_k\alpha_k)}. \ee

If the $t_k=a_k$ at most finitely often, then by \cite[Theorem 1]{Pinner}  it is enough to look at the sequence of $n$ of the form
$$ Q_k:=\sum_{i=1}^k\frac{1}{2}(a_i-2+t_i)q_{i-1},\quad Q_k+q_{k-1},\quad -(q_k-q_{k-1}-Q_k),  \quad -(q_k-Q_k),$$
where the $q_i$ are the convergent denominators $p_i/q_i=[0;a_1,\ldots ,a_i]^-$, $q_i=(\bar\alpha_1\cdots \bar\alpha_i)^{-1}$.

Writing
  \begin{align}\label{defd}
    d_k^-&:=t_k\bar\alpha_k+t_{k-1}\bar\alpha_k\bar\alpha_{k-1}+t_{k-2}\bar\alpha_k\bar\alpha_{k-1}\bar\alpha_{k-2}+\cdots , \\ \nonumber
    d_{k}^{+}&:=t_{k+1}\alpha_{k}+t_{k+2}\alpha_{k}\alpha_{k+1}+t_{k+3}\alpha_{k}\alpha_{k+1}\alpha_{k+2}+\cdots .
\end{align}  
we will use this result expressed in a more symmetric form.
\begin{lemma}\label{mainlemma}
If $\gamma\notin\mathbb{Z}+\alpha\mathbb{Z}$ and the $\alpha$--$expansion$ of $\gamma$ has $t_i=a_i$ at most finitely many times, then 
\begin{equation*}
    M(\alpha,\gamma)=\liminf_{k\rightarrow\infty}\min\{s_1(k), s_2(k), s_3(k), s_4(k)\},
\end{equation*}
where 
\begin{align*}
    s_1(k)&:=\frac{1}{4}(1-\bar\alpha_k+d_k^-)(1-\alpha_k+d_k^+)/(1-\bar\alpha_k\alpha_k),\\
     s_2(k)&:=\frac{1}{4}(1+\bar\alpha_k+d_k^-)(1+\alpha_k-d_k^+)/(1-\bar\alpha_k\alpha_k),\\
      s_3(k)&:=\frac{1}{4}(1-\bar\alpha_k-d_k^-)(1-\alpha_k-d_k^+)/(1-\bar\alpha_k\alpha_k),\\
       s_4(k)&:=\frac{1}{4}(1+\bar\alpha_k-d_k^-)(1+\alpha_k+d_k^+)/(1-\bar\alpha_k\alpha_k).
\end{align*}
\end{lemma}
Notice that 
$$ s_1(k)s_3(k) =\frac{\left( (1-\bar\alpha_k)^2-(d_k^-)^2\right)\left((1-\alpha_k)^2-(d_k^+)^2\right)}{16(1-\bar\alpha_k\alpha_k)^2}\leq
\frac{(1-\bar\alpha_k)^2(1-\alpha_k)^2}{16(1-\bar\alpha_k\alpha_k)^2}. $$
If $t_k=a_k$ at most finitely often, this plainly  gives
\be \label{oldrho} \rho(\alpha)\leq \liminf_{k\rightarrow \infty} \frac{(1-\bar\alpha_k)(1-\alpha_k)}{4(1-\bar\alpha_k\alpha_k)},\ee
and $\bar\alpha_k>1/R$ when $a_k=R$ readily gives \eqref{oldbound}. 
When the $a_i$ are all even we can take the $t_i=0$ and have equality in \eqref{oldrho}, but when $R$ is odd $|d_k^-|$  will not be small for the $a_k=R$ and \eqref{oldbound} can be improved.

\section{Proofs of Theorems \ref{Odds} and \ref{Sharps}}

\begin{proof}[Proof of Theorem \ref{Odds}]  Setting $\beta=[0;\overline{R}]^-$,  we can assume that $\alpha_k,\bar\alpha_k\leq \beta$ as $k\rightarrow \infty$. When $R=3$ we have $\beta=\frac{1}{2}(3-\sqrt{5})$.

We can assume $t_k=a_k$ does not occur infinitely often, since
$$ \frac{\bar\alpha_k}{4(1-\bar\alpha_k\alpha_k)}=\frac{1}{4(a_k-\bar\alpha_{k-1}-\alpha_k)} \leq \frac{1}{4(R-2\beta)} \leq \frac{1}{4\sqrt{5}}<C(3)\leq C(R). $$
This ensures that the $|d_k^-|\leq 1-\bar\alpha_k$.
Notice that
$$ s_3(k)s_4(k)= \frac{  \left( (1-d_k^-)^2-\bar\alpha_k^2\right)\left( 1 -(\alpha_k+d_k^+)^2\right) }{16(1-\bar\alpha_k\alpha_k)^2} < \frac{   (1-d_k^-)^2}{16(1-\bar\alpha_k\alpha_k)^2}. $$
Hence if $a_k=R$ and $t_k\geq 3$ then $d_k^-\geq  (3+d_{k-1}^-)\bar\alpha_k>2\bar\alpha_k$,  $\bar\alpha_k>1/R$, and 
$$ \min\{ s_3(k),s_4(k) \} \leq \frac{1-d_{k}^{-}}{4(1-\bar\alpha_k\alpha_k)}\leq \frac{1-2\bar\alpha_k}{4(1-\bar\alpha_k\alpha_k)}\leq \frac{1-\frac{2}{R}}{4(1- \frac{\beta}{R})}< C(R),$$
since $\frac{1}{R}(1-\frac{1}{R}) +\beta (1-\frac{1}{R})(1-\frac{1}{R^2})<1$.
Likewise if $t_{k}\leq -3$ using $s_1(k)$ and $s_2(k).$

So we can assume that the $a_k=R$ have $t_k=\pm 1$. Suppose now that $a_k=R$ with $t_k=-1$ (if $t_k=+1$ then we switch
$s_1(k)$ and $s_3(k)$ etc.).

\vskip0.2in
\nni
{\bf Case 1: $d_{k-1}^-$, $d_k^+\leq \frac{1}{R}.$}

Then
$$ s_1(k) \leq \frac{ \left(1-\bar\alpha_k (2-\frac{1}{R})\right)\left( 1-\alpha_k +\frac{1}{R}\right)}{4( 1-\bar\alpha_k\alpha_k)}  < \frac{1}{4}   \left(1-\bar\alpha_k \left(2-\frac{1}{R}\right)\right)\left(1+\frac{1}{R}\right) < C(R), $$
the second inequality using $\bar\alpha_k(1+\frac{1}{R})<1$ and the third using $\bar\alpha_k>\frac{1}{R}.$

\vskip0.2in
\nni
{\bf Case 2: $d_{k-1}^-$, $d_k^+\geq \frac{1}{R}.$}

Then
$$ s_3(k) \leq \frac{ \left(1-\frac{\bar\alpha_k }{R}\right)\left( 1-\alpha_k -\frac{1}{R}\right)}{4( 1-\bar\alpha_k\alpha_k)}  < \frac{1}{4}   \left(1-\frac{\bar\alpha_k }{R}\right)\left(1-\frac{1}{R}\right) < C(R), $$
the second inequality using $\bar\alpha_k(1-\frac{1}{R})<1$ and the third using $\bar\alpha_k>\frac{1}{R}.$

\vskip0.2in
\nni
{\bf Case 3: $d_{k-1}^-\leq\frac{1}{R}$, $d_k^+\geq \frac{1}{R}.$}

We observe that $d_k^-\leq -(1-\frac{1}{R})\bar\alpha_k$ and
$$ \min \{s_1(k),s_3(k) \} \leq \sqrt{ s_1(k)s_3(k)}= \frac{1}{4}\sqrt{S},\;\; $$
with
\begin{align*}S & =\frac{\left((1-\bar\alpha_k)^2-(d_k^-)^2\right)\left((1-\alpha_k)^2-(d_k^+)^2\right)}{(1-\bar\alpha_k\alpha_k)^2}\\
& \leq \frac{\left((1-\bar\alpha_k)^2-(1-\frac{1}{R})^2\bar\alpha_k^2\right)\left((1-\alpha_k)^2-\frac{1}{R^2}\right)}{(1-\bar\alpha_k\alpha_k)^2}.
\end{align*}
Hence we have
\begin{align*}
 S  & < \left((1-\bar\alpha_k)^2-\left(1-\frac{1}{R}\right)^2\bar\alpha_k^2\right)\left(1-\frac{1}{R^2}\right)\\
& < \left(\left(1-\frac{1}{R}\right)^2-\left(1-\frac{1}{R}\right)^2\frac{1}{R^2}\right)\left(1-\frac{1}{R^2}\right)=\left(1-\frac{1}{R}\right)^2\left(1-\frac{1}{R^2}\right)^2,
\end{align*}
the first inequality since $(2-\bar\alpha_k\alpha_k)\bar\alpha_k(1-\frac{1}{R^2}) +\alpha_k <2$, and the second  since $f(x)=1-2x+(\frac{2}{R}-\frac{1}{R^2})x^2$ is decreasing for $1\geq x\geq \frac{1}{R}$.

\vskip0.2in
\nni
{\bf Case 4: $d_{k-1}^-\geq \frac{1}{R}$, $d_k^+\leq \frac{1}{R}.$}

This is almost the same as Case 3, except we use
$$ \min \{s_1(k-1),s_3(k-1) \} \leq \sqrt{s_1(k-1)s_3(k-1)}= \frac{1}{4}\sqrt{S},\;\; $$
with
\begin{align*} S & =\frac{\left((1-\bar\alpha_{k-1})^2-(d_{k-1}^-)^2\right)\left((1-\alpha_{k-1})^2-(d_{k-1}^+)^2\right)}{(1-\bar\alpha_{k-1}\alpha_{k-1})^2}\\
& \leq \frac{\left((1-\bar\alpha_{k-1})^2-\frac{1}{R^2}\right)\left((1-\alpha_{k-1})^2-(1-\frac{1}{R})^2\alpha_{k-1}^2\right)}{(1-\bar\alpha_{k-1}\alpha_{k-1})^2}.
\end{align*}
The proof follows, using $\bar\alpha_{k-1}$ and $\alpha_{k-1}$ in place of $\alpha_k$ and $\bar\alpha_k$.
\end{proof}

\begin{proof}[Proof of Theorem \ref{Sharps}] If $t_k=-1$ then as $N\rightarrow \infty$
$$\bar\alpha_k\rightarrow \frac{1}{R}, \quad \alpha_k\rightarrow 0,\quad d_{k}^-\rightarrow -\frac{1}{R}+\frac{1}{R^2},\quad d_k^+\rightarrow \frac{1}{R}.$$
So
\begin{align*} s_1(k) & \rightarrow \frac{1}{4}\left(1-\frac{2}{R}+\frac{1}{R^2}\right)\left(1+\frac{1}{R}\right)=C(R),\\
s_3(k)& \rightarrow \frac{1}{4} \left(1-\frac{1}{R^2}\right)\left(1-\frac{1}{R}\right)=C(R),
\end{align*}
while $s_4(k)>s_2(k)\rightarrow \frac{1}{4}(1+\frac{1}{R^2})(1-\frac{1}{R})>C(R).$  We get the same values for
$s_1(k-1),s_3(k-1),s_2(k-1),s_4(k-1)$ and $M(\alpha,\gamma_*)\rightarrow C(R)$ as $N\rightarrow \infty$.
\end{proof}

\vspace{0cm}

\end{document}